\newcommand{\bbp}{{\mathbb P}}
\newcommand{\bbq}{{\mathbb Q}}
\newcommand{\bbr}{{\mathbb R}}
\newcommand{\bbz}{{\mathbb Z}}
\newcommand{\lrarrow}{\longrightarrow}
\newcommand{\kableadd}%
{Department of Mathematics\\ Cornell University\\
Ithaca NY 14853}
\newcommand{\lan}{\langle}
\newcommand{\ran}{\rangle}
\newtheorem{thm}{Theorem}[section]
\newtheorem{lem}[thm]{Lemma}
\newtheorem{prop}[thm]{Proposition}
\newtheorem{defn}[thm]{Definition}
\newtheorem{rem}[thm]{Remark}        
\newtheorem*{ack*}{Acknowledgment}
\newtheorem*{note*}{Notation and conventions}
\newcommand{\chow}{\operatorname{CH}}
\newcommand{\lisom}{\stackrel{\sim}{\lrarrow}}
\newcommand{\Spec}{\operatorname{Spec}}
\newcommand{\divisor}{\operatorname{div}}
\begin{document}
\pagestyle{headings}

\title[Hasse principle for the Chow groups on quadric fibrations]{On the Hasse principle for the Chow groups of zero-cycles on quadric fibrations}
\author{Kazuki Sato}
\address{Mathematical Institute, Tohoku Univarsity, Sendai, Miyagi, 980-8570, Japan}
\email{sb0m17@math.tohoku.ac.jp}
\begin{abstract}
We give a sufficient condition for the injectivity of the global-to-local map of the relative Chow group of zero-cycles on a quadric fibration of dimension $\leq 3$ defined over a number field.
\end{abstract}
\subjclass[2010]{14C15}
\thanks{}
\maketitle

\renewcommand\baselinestretch{1.1}

%\tableofcontents

\section{Introduction}
Let $k$ be a number field and $\Omega$ the set of its places.
For any variety $X$ over $k$, 
$\chow_0(X)$ denotes the Chow group of zero-cycles on $X$ modulo rational equivalence.
Then we have the global-to-local map
\[\chow_0(X) \lrarrow \prod _{v \in \Omega} \chow_0(X \otimes _k k_v),\]
where for each place $v \in \Omega$, $k_v$ denotes the completion of $k$ at $v$.
If there exists a proper morphism $X \to C$ from $X$ to another variety $C$, we also have the relative version of the global-to-local map
\[\Phi : \chow_0(X/C) \lrarrow \prod _{v \in \Omega} \chow_0(X \otimes _k k_v / C \otimes _k k_v),\]
where $\chow_0(X/C)$ denotes the kernel of the push-forward map $\chow_0(X) \to \chow_0(C)$.
In this paper, we study the injectivity of $\Phi$ for quadric fibrations over curves.

First, let us recall some known results for a surface $X$.
In the case where $X$ is a conic bundle surface over the projective line $\bbp^1_k$, Salberger proved that 
the kernel of $\Phi$ is controlled by the Tate-Shafarevich group of the N\'eron-Severi torus of $X$ 
%there exists the following exact sequence of finite abelian groups
%\[0 \lrarrow {\cyr Sh}^1(k, T) \lrarrow A_0(X) \stackrel{\Phi}{\lrarrow} \bigoplus _{v \in \Omega} A_0(X \otimes _k k_v) \lrarrow \Hom ( H^1(k, \hat{T}), \bbq/ \bbz),\]
%where $A_0(X)$ is the kernel of the degree homomorphism $\deg : \chow_0(X) \to \bbz$ 
(see \cite{sal} for the details).
%This result was generalized by Colliot-Th\'el\`ene to the case where the genus of the base curve is arbitrary \cite{ct}.
%He showed that if $X$ is a conic bundle surface over a smooth projective curve $C$ of arbitrary genus over a totally imaginary number field $k$, then the image of $\Phi$ is just the kernel of the paring with the Brauer group of $X$.
%\[ \chow_0 (X/C) \lrarrow \bigoplus _v \chow_0(X_v/C_v) \lrarrow \Hom( \operatorname{Br}(X)/ \operatorname{Br}(C), \bbq / \bbz ),\]
%where $\chow_0(X/C)$ denotes the kernel of the map $\chow_0(X) \to \chow_0(C)$.  
%Afterward Frossard \cite{fro2} extended above exact sequence in the case that $X$ is a Severi-Brauer fibration over a curve (i.e. a variety with a proper dominant morphism into a curve $C$ whose generic fiber is a Severi-Brauer variety).

In the case where $X$ is a quadric fibration of dimension $\geq 4$, few results are known.
Parimala and Suresh proved that if $X \to C$ is a quadratic fibration over a smooth projective curve $C$ whose generic fiber is defined by a Pfister neighbour of rank $\geq 5$, then the global-to-local map restricted to real places
 \[ \Phi _{\mathrm{real}} : \chow_0(X/C) \lrarrow \bigoplus _{v: \mathrm{real \ place}} \chow_0(X \otimes _k k_v/C \otimes _k k_v)\]
 is injective \cite{ps}.
By using this injectivity, they deduced a finiteness result of the torsion subgroup of the Chow group $\chow_0(X)$ of zero-cycles on $X$.

When $X \to C$ is a quadric fibration of $\dim \leq 3$, not only that the map $\Phi_{\mathrm{real}}$ is not injective in general,
but also the map $\Phi$ is not injective \cite{suresh}.
However, the map $\Phi$ can be injective.
If the generic fiber of $X \to C$ is defined by a quadratic form over a base field $k$, the map $\Phi$ is injective (Theorem \ref{mt}).
The above condition does not imply the injectivity of $\Phi_{\mathrm{real}}$, and we give an example of this (Propositon \ref{mt2}).
Note that we don't assume that quadric fibrations are admissible (for the definition of admissibility, see \cite{ps}).

%We give a counterexample (Proposition \ref{mt2}).

%\On the other hand, the map $\Phi$ can b 
%By above results はおかしいので何か修正する
%By above results, the following question arises; 
%are the maps $\Phi$ and $\Phi_{\mathrm{real}}$ injective if $\dim X \leq 3$?
%On the injectivity of $\Phi$, it is known that in general $\Phi$ is not injective \cite{suresh}.
%However, we give a sufficient condition for the injectivity of $\Phi$ (Theorem \ref{mt}).
%In addition, we show that $\Phi_{\mathrm{real}}$ is not injective under the above condition even though $\Phi$ is injective and the generic fiber is \defined by a Pfister form (Proposition \ref{mt2}).

\begin{note*}\normalfont
In section 2, $k$ denotes a field of characteristic different from $2$.
In section 3, $k$ denotes a number field (i.e. a finite extension field of $\bbq$).
For a variety $X$, $| X |$ denotes the set of closed points on $X$.
%For a variety $X$ over $k$ and a integer $i \geq 0$, $X_{(i)}$ (resp. $X^{(i)}$ ) denotes the set of points of dimension $i$ (resp. codimension $i$) in $X$.
We denote by $\chow_i(X)$ the Chow group of cycles of dimension $i$ on $X$ modulo rational equivalence \cite{fulton}.
For a geometrically integral variety $X$ over $k$, $k(X)$ denotes the function field of $X$.
For any extension $L/k$ of fields, $L(X)$ denotes the function field of $X \otimes _k L$.
If $x$ is a point in $X$, $k(x)$ denotes the residue field at $x$.
\end{note*}

\begin{ack*}\normalfont
I would like to thank Professor M. Hanamura for helpful comments and warm encouragement.
\end{ack*}

\section{Definition of the map $\delta$}
%Preliminaryは複数形で使う？
%Notationと基本的な定義のセクションは分けたほうが見やすいかも。
By a \textit{quadratic space} over $k$, we mean a nonsingular quadratic form over $k$.
We denote by $W(k)$ the Witt group of quadratic spaces over $k$ and by $Ik$ the fundamental ideal of $W(k)$ consisting of classes of even rank quadratic spaces.
We represent quadratic spaces over $k$ by diagonal matrices $\lan a_1, \dots , a_n \ran ( a_i \in k^{*})$ with respect to the choice of an orthogonal basis.
By an $n$-fold \textit{Pfister form} over $k$, we mean a quadratic space of the type $\lan 1,a_1 \ran \otimes \lan 1, a_2 \ran \otimes \dots \otimes \lan 1, a_n \ran$.
The set of nonzero values of a Pfister form is a subgroup of the multiplicative group $k^*$ of $k$ \cite[Theorem 1.8, p. 319]{lam}.
By a \textit{Pfister neighbor} of an $n$-fold Pfister form $q$, we mean a quadratic space of rank at least $2^{n-1} +1$ which is a subform of $q$ \cite[Example 4.1]{kn2}.

For any quadratic space $q$ over $k$, let $N_q(k)$ be the subgroup of $k^*$ generated by norms from finite extensions $E$ of $k$ such that $q$ is isotropic over $E$.
If $q$ is isotropic, then clearly $N_q(k) = k^*$.
For any $a \in k^*$, $q$ is isotropic if and only if $\lan a \ran \otimes q$ is isotropic.
Therefore $N_q(k) = N_{\lan a \ran \otimes q}(k)$.
By Knebusch's norm principle, $N_q(k)$ is generated by elements of the form $xy$, with $x, y \in k^*$ which are values of $q$ over $k$ \cite[Lemme 2.2]{cts}. 
In particular, if a quadratic form $q$ is of the form $q = \lan 1, a \ran \otimes \lan 1, b \ran$, then $N_q(k)$ is equal to the group $\operatorname{Nrd}_{D / k}( D^{*})$ of reduced norms of the quaternion algebra $D = (-a, -b )_k$.
Suppose that $q'$ is a Pfister neighbor of a Pfister form $q$.
Then, for any extension $E/k$, $q'$ is isotropic over $E$ if and only if $q$ is isotropic over $E$ \cite[Example 4.1]{kn2}.
So we have $N_q(k) = N_{q'}(k)$.

The following lemma is elementary and well-known, but the proof does not seem to be written explicitly in the literature.
 
\begin{lem}
Let $q$ be a Pfister form over $k$.
Then
\[ N_q(k) = \{ x \in k^* \mid q \otimes \lan 1, -x \ran \ \textrm{is isotropic}\}. \]
In particular, $x$ belongs to $N_q(k)$ if and only if $q \otimes \lan 1, -x \ran = 0$ in $W(k)$.
\end{lem}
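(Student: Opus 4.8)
The plan is to identify both sides of the claimed equality with the group $D(q)$ of nonzero values represented by $q$ over $k$, and then to upgrade the isotropy condition on the right-hand side to the Witt-triviality condition using the roundness of Pfister forms.

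First I would show $N_q(k) = D(q)$. By the fact recalled above, the set $D(q)$ of nonzero values of the Pfister form $q$ is a subgroup of $k^*$, and since $q$ visibly represents $1$ we have $1 \in D(q)$. On the other hand, the consequence of Knebusch's norm principle quoted earlier says that $N_q(k)$ is generated by the products $xy$ with $x, y \in D(q)$. As $D(q)$ is closed under multiplication, every such product already lies in $D(q)$; conversely every $z \in D(q)$ is the product $z \cdot 1$ with $z, 1 \in D(q)$. Hence the generating set is exactly $D(q)$, so $N_q(k) = D(q)$.

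Next I would show that the right-hand set $S = \{x \in k^* \mid q \otimes \lan 1, -x \ran \ \textrm{is isotropic}\}$ equals $D(q)$. Writing $q \otimes \lan 1, -x \ran = q \perp (-x)q$, isotropy means $q(u) = x\,q(v)$ for some $(u,v) \neq (0,0)$. If $x \in D(q)$, I would pick $u$ with $q(u) = x$ and $v$ with $q(v) = 1$, so that $(u,v)$ is isotropic and $x \in S$. For the reverse inclusion, if $q$ is isotropic then $D(q) = k^*$ and there is nothing to prove; if $q$ is anisotropic, then any isotropic vector $(u,v)$ forces $v \neq 0$ (else $u = 0$ as well) and $q(u) \neq 0$ (else $x = 0$), whence $x = q(u)\,q(v)^{-1} \in D(q)$ because $D(q)$ is a group. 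This gives $S = D(q)$, and together with the previous step $N_q(k) = S$, which is the main assertion.

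Finally, for the ``in particular'' clause I would observe that $q \otimes \lan 1, -x \ran$ is itself an $(n+1)$-fold Pfister form. The decisive input is the standard fact that a Pfister form is isotropic if and only if it is hyperbolic, i.e.\ if and only if it vanishes in $W(k)$; the condition defining $S$ is therefore equivalent to $q \otimes \lan 1, -x \ran = 0$ in $W(k)$, and the refinement follows from $N_q(k) = S$. The main obstacle here is not computational but structural: everything rests on the roundness of Pfister forms, namely the dichotomy that such a form is either anisotropic or hyperbolic, which simultaneously underlies the group structure of $D(q)$ and the equivalence between isotropy and Witt-triviality. Once that dichotomy is invoked, the proof reduces to assembling the represented-value descriptions of $N_q(k)$ and of $S$.
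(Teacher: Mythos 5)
Your proposal is correct and follows essentially the same route as the paper: identify $N_q(k)$ with the group of nonzero values of the Pfister form, extract $x = q(v_1)/q(v_2)$ from an isotropic vector of $q \otimes \lan 1, -x \ran$ in the anisotropic case, use that $q$ represents $1$ for the converse inclusion, and invoke the isotropic-iff-hyperbolic dichotomy for Pfister forms for the final clause. You merely spell out the identification of $N_q(k)$ with the value group (via Knebusch's norm principle and the group structure of the set of represented values) in more detail than the paper does.
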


\begin{proof}
Since $q$ is a Pfister form, $N_q(k)$ is the set of non-zero values of $q$.
If $q$ is isotropic, the assertion is clear.
We suppose that $q$ is anisotropic.
If $q \otimes \lan 1, -x \ran$ is isotropic, then there exist two vectors $v_1, v_2$ in the underlying vector space of $q$, not both zero, such that $q(v_1) -xq(v_2) = 0$.
%%%%%%%%%%%%%%%%%%%%%%%%%%%%%%%%%%%%%%%%%%%%%%%%%%%%%%%%%%%%%%%%%%%%%%%%%%%%%%%not both zeroのより適切な表現を調べる%%%%%%%%%%%%%%%%%%
Therefore
\[ x= q(v_1)/q(v_2) \in N_q(k).\]
The other implication follows from the fact that a Pfister form represents $1$.

The last assertion results from the basic fact on Pfister forms \cite[Theorem 1.7, p. 319]{lam}.
\end{proof}

\begin{defn}\normalfont
Let $C$ be a smooth projective geometrically integral curve over $k$.
A \textit{quadric fibration} ($X, \pi$) over $C$ is a geometrically integral variety $X$ over $k$, together with a proper flat $k$-morphism $\pi : X \to C$ such that each point $P$ of $C$ has an affine neighborhood $\Spec A(P)$, with $X \times _C \Spec A(P)$ isomorphic to a quadric in $\bbp^n_{A(P)}$ and such that the generic fiber of $\pi$ is a smooth quadric.
\end{defn}

Given a quadratic space $q$ over the function field $k(C)$ of $C$ of rank $\geq 3$,
we can easily construct a quadric fibration $\pi :X \to C$, 
whose generic fiber is given by the quadratic space $q$.
It is not unique, but two quadric fibrations having the same generic fiber are birational over $C$.

%up to birationalという表現を改める

Let $\pi : X \to C$ be a quadric fibration with the generic fiber given by a quadratic space $q$, and $\chow_0(X/C)$ denote the kernel of the map
\[ \pi _* : \chow_0(X) \lrarrow \chow_0(C) .\]
We have the following commutative diagram with exact rows (see \cite{cts})
\[ \displaystyle
\begin{CD}
@. \displaystyle\bigoplus_{x \in |X_{\eta}| } k(x)^{*} @>>> \displaystyle\bigoplus_{P \in |C|} \chow_0(X_P) @>>> \chow_0(X) @>>> 0 \\
@. @VV\oplus N_{k(x) / k(C)}V @VV\oplus\deg_{X_P /k(P)}V @VV\pi_{*}V \\
0 @>>> k(C)^{*}/k^{*} @>>> \displaystyle\bigoplus_{P \in |C|} \bbz @>>> \chow_0(C) @>>> 0, 
\end{CD}
\]
where $X_\eta$ is the generic fiber of $\pi: X \to C$ and $\deg_{X_P /k(P)}: \chow_0(X_P) \to \bbz$ is the degree map.
Since the map $ \bigoplus _{x \in |X_{\eta}|} k(x)^* \to k(C)^*$ is induced by norms, 
the image is precisely $N_q(k(C))$.
By the snake lemma and the fact that $A_0(X_P) = 0$ (\cite{sw}),
we have an exact sequence
\[ 0 \lrarrow \chow_0(X/C) \stackrel{\delta}{\lrarrow} k(C)^* / k^* N_q(k(C)) \lrarrow \bigoplus_{P \in C^{(1)}} \bbz/\deg_{X_P /k(P)}(\chow_0(X_P)). \]

\begin{rem}\normalfont
We denote by $k(C)_{\mathrm{dn}}^*(q)$ the subgroup of $k(C)^{*}$ consisting of functions, which, at each closed point $P \in C$,
can be written as a product of a unit at $P$ and an element of $N_q(k(C))$.  
Colliot-Th\'el\`ene and Skorobogatov \cite{cts} proved that the above homomorphism $\delta$ defines an isomorphism
\[ \delta : \chow_0(X/C) \lisom k(C)_{\mathrm{dn}}^*(q) / k^* N_q(k(C)) \]
for an \textit{admissible} quadric fibration $\pi: X \to C$.
However, we don't have to assume the admissibility for our main results.

\end{rem}

\section{Injectivity of the global-to-local map}
%主定理で使われる定義、記号をきちんと定義する。
%証明の議論を精密に、厳密に説明する。
Let $k$ be a number field and $\Omega$ be the set of places of $k$.
For any $v \in \Omega$, $k_v$ denotes the completion of $k$ at $v$.

In \cite[Theorem 5.4]{ps}, Parimala and Suresh proved that if $X \to C$ is an admissible quadric fibration whose generic fiber is given by a Pfister neighbor over $k(C)$ of rank $\geq 5$, then the map
\[ \Phi_{\mathrm{real}} : \chow_0(X/C) \lrarrow \bigoplus _{v:   \mathrm{real \ places}} \chow_0(X\otimes _k k_v/C\otimes _k k_v) \]
is injective, where $v$ runs over all real places of $k$.
In the case where the rank of the quadratic form defining the generic fiber is less than $5$, we give a sufficient condition for the injectivity of the global-to-local map $\Phi$.

\begin{thm}\label{mt}
Let $\pi: X \to C$ be a quadric fibration over a number field $k$.
Assume that $\dim X = 2$ or $3$, and the generic fiber of $\pi$ is isomorphic to a quadric defined over $k$ $($i.e. there exists a quadric $Q \subset \bbp_k^{N}$ such that the generic fiber is isomorphic to $Q \otimes _k k(C)$ over $k(C)$
$)$.
Then, the natural map
\[ \Phi : \chow_0(X/C) \lrarrow \bigoplus _{v \in \Omega} \chow_0(X\otimes _k k_v/C\otimes_k k_v) \]
is injective.
\end{thm}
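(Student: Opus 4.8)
The plan is to pass from $\Phi$ to the explicit description of $\chow_0(X/C)$ furnished by the injective map $\delta$, and then to prove a local--global principle for the norm group $N_q$. Write $q=q_0\otimes_k k(C)$ with $q_0$ a quadratic form over $k$ of rank $3$ or $4$ (the rank is $\dim X$, since the fibres are smooth quadrics of dimension $\dim X-1$). I first record that the formation of $\delta$ commutes with base change to each $k_v$: there is a commutative square whose top arrow is $\delta$, whose bottom arrow is $\bigoplus_v\delta_v$, whose left arrow is $\Phi$, and whose right arrow is the natural map
\[ k(C)^*/k^*N_q(k(C))\lrarrow \bigoplus_{v\in\Omega} k_v(C)^*/k_v^*N_{q_v}(k_v(C)). \]
Since $\delta$ and every $\delta_v$ are injective (apply the exact sequence of Section 2 over $k$ and over each $k_v$), a class in $\ker\Phi$ is represented by some $f\in k(C)^*$ that lies in the image of $\delta$ and satisfies $f\in k_v^*N_{q_v}(k_v(C))$ for every $v$. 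The goal becomes to show that then $f\in k^*N_q(k(C))$.

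The second step uses that $q_0$, and hence the relevant Clifford/quaternion data, is defined over $k$. In the rank-$3$ case $q_0$ is a Pfister neighbour of a $2$-fold Pfister form $\langle\langle\alpha,\beta\rangle\rangle$ with $\alpha,\beta\in k^*$, so by the Lemma of Section 2 the condition $f\in N_q(k(C))$ is equivalent to hyperbolicity over $k(C)$ of the $3$-fold Pfister form $\langle\langle\alpha,\beta\rangle\rangle\otimes\langle 1,-f\rangle$, i.e. to the vanishing of its Arason invariant $\xi:=(D_0)\cup(f)\in H^3(k(C),\bbz/2)$, where $D_0=(\alpha,\beta)_k\in\operatorname{Br}(k)[2]$ is a \emph{constant} class. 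In the rank-$4$ case I pass to the discriminant extension $K=k(\sqrt d)$, $d=\operatorname{disc}(q_0)$: over $K$ the form acquires trivial discriminant and is controlled by a constant quaternion class $D_1\in\operatorname{Br}(K)[2]$, and membership $f\in N_q(k(C))$ is encoded by $\xi:=\operatorname{cor}_{K(C)/k(C)}\big((D_1)\cup(f)\big)\in H^3(k(C),\bbz/2)$. In both cases the assertion $f\in k^*N_q(k(C))$ translates into $\xi$ lying in the image of the constant subgroup coming from $H^3(k,\bbz/2)$.

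Third, I exploit the image-of-$\delta$ condition. An element in the image of $\delta$ lies in the kernel of the map to $\bigoplus_{P}\bbz/\deg_{X_P/k(P)}(\chow_0(X_P))$, which at each closed point $P$ forces the residue $\partial_P\xi=v_P(f)\cdot(D_0\otimes k(P))$ to vanish (either $v_P(f)$ is even, or the fibre $X_P$ is isotropic and $D_0$ splits over $k(P)$). Hence $\xi$ is unramified along all of $C$ and comes from $H^3(C,\bbz/2)$. Reading off the local hypotheses place by place: at every finite or complex $v$ one has $H^3(k_v,\bbz/2)=0$, so the local condition forces $\xi$ to vanish over $k_v(C)$, whereas at the real places $\xi$ is constrained to the image of $H^3(k_v,\bbz/2)$. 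Feeding this into the Hochschild--Serre spectral sequence for $C\to\Spec k$, the only graded piece of $\xi$ that need not be constant lives in $H^2(k,H^1(C\otimes_k\bar k,\bbz/2))$ and equals $(D_0)\cup\bar f$, where $\bar f$ is the geometric component of $f$ in the $2$-torsion of the Jacobian.

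The decisive and hardest step is to kill this last term. Because $D_0$ (resp.\ $D_1$ over $K$) is constant and $\xi$ is unramified, the obstruction is pinned down to the arithmetic of the quadratic extension that splits the fibre --- the splitting field of $D_0$ in rank $3$, and $K=k(\sqrt d)$ in rank $4$ --- and everywhere-local triviality then makes it an instance of the Hasse norm theorem for that quadratic (hence cyclic) extension, combined with the Albert--Brauer--Hasse--Noether injection $\operatorname{Br}(k)\hookrightarrow\bigoplus_v\operatorname{Br}(k_v)$. Since the Hasse norm theorem requires \emph{all} completions, this is precisely the point at which the finite places are indispensable, and it explains why injectivity holds for $\Phi$ while failing for $\Phi_{\mathrm{real}}$, the latter discarding exactly the places needed to run the norm theorem. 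The main obstacle is therefore to carry out this reduction cleanly --- showing that the Jacobian-torsion term $(D_0)\cup\bar f$, though a priori valued in a Tate--Shafarevich group that can be nonzero, is forced by the constancy and the unramifiedness into the image of the (vanishing) norm-one obstruction of the relevant quadratic extension --- together with the corestriction bookkeeping in the rank-$4$ case.
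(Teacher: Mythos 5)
Your reduction of the problem, via the injectivity of $\delta$, to the injectivity of $k(C)^*/k^*N_q(k(C))\to\prod_v k_v(C)^*/k_v^*N_q(k_v(C))$, and the translation of membership in $N_q$ into the hyperbolicity of the $3$-fold Pfister form $q\otimes\lan 1,-f\ran$ (equivalently the vanishing of its degree-three invariant), are the right moves and agree with the paper up to language. But the proposal stops exactly where the real difficulty is. You correctly isolate the obstruction as an everywhere-locally-constrained class whose nonconstant part lies in $H^2(k,H^1(C\otimes_k\bar k,\bbz/2))$, concede that it ``a priori lives in a Tate--Shafarevich group that can be nonzero,'' and then assert without argument that the Hasse norm theorem for a quadratic extension plus Albert--Brauer--Hasse--Noether will kill it. These tools are not adequate: they control norms and Brauer classes of the number field, whereas the term you must kill genuinely involves the $2$-torsion of the Jacobian of $C$ and is not a norm-one obstruction for any quadratic extension of $k$. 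What actually closes the gap in the paper is the theorem of Arason--Elman--Jacob: for a number field $L$, the map $I^3L(C)\to\prod_w I^3L_w(C)$ is injective. Applied over the discriminant extension $L=k(\sqrt d)$, where $q$ becomes the $2$-fold Pfister form $\lan 1,a\ran\otimes\lan 1,b\ran$, this gives at once that $q\otimes\lan 1,-\mu f\ran$ is hyperbolic over $L(C)$ once it is hyperbolic over every $L_w(C)$; one then descends using $N_q(L(C))\cap k(C)^*=N_q(k(C))$ from Colliot-Th\'el\`ene--Skorobogatov. This Hasse principle for $I^3$ of the function field of a curve over a number field is a deep input of a different nature from classical class field theory, and your argument has a genuine hole without it.

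Two smaller points. First, the local hypothesis is only $f\in k_v^*N_q(k_v(C))$, not $f\in N_q(k_v(C))$; at finite places this is harmless since $k_v^*\subset N_q(k_v(C))$, but at the real places one must first replace $f$ by $\mu f$ for a single global constant $\mu\in k^*$ whose signs at all relevant real places match the local correcting constants (weak approximation). Your formulation leaves the class merely ``constrained to the image of $H^3(k_v,\bbz/2)$'' at the real places and never produces this global $\mu$, so even granting a Hasse principle you would conclude only membership in $N_q(k(C))$ up to an unidentified constant class rather than in $k^*N_q(k(C))$. Second, the paper proves injectivity of the full map on $k(C)^*/k^*N_q(k(C))$, so your step extracting unramifiedness of $\xi$ from the image-of-$\delta$ condition, while not wrong, is an unnecessary detour.
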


\begin{proof}
Let $q$ be a quadratic form defining the generic fiber of the quadric fibration $\pi: X \to C$.
In order to prove the theorem, it is sufficient to show that the natural map
\[ k(C)^* / k^* N_q(k(C)) \lrarrow \prod_{v \in \Omega} k_v(C)^* / k_v^* N_q(k_v(C)) \]
is injective.

We may assume that $q= \lan 1, a, b, abd\ran, a, b, d \in k^*$.
Put $L := k( \sqrt{d})$.
Note that $q$ is isometric to $\lan 1, a \ran \otimes \lan 1, b \ran$ over $L(C)$.
Let $f \in k(C)^*$ such that $f \in k_v^*N_q(k_v(C))$ for all places $v$ of $k$.
For any real place $w$ of $L$, denote by $w'$ the restriction of $w$ to $k$.
Since $f \in k_{w'}^*N_q(k_{w'}(C))$, there exists $\mu_{w'} \in k^*_{w'}$ such that $\mu_{w'}f \in N_q(k_{w'}(C))$.
We can choose $\mu \in k^*$ such that the sign of $\mu$ is the same as that of $\mu_{w'}$ for each real place $w$ of $L$.
Thus we have $\mu f \in N_q(k_{w'}(C)) \subset N_q(L_w(C))$.
Therefore $q \otimes \lan 1, -\mu f \ran$ is hyperbolic over $L_w(C)$ for each real place $w$ of $L$.
For a complex place $w$ of $L$, it is clear that $q \otimes \lan 1, -\mu f \ran$ is hyperbolic over $L_w(C)$.
Further, for a finite place $w$ of $L$, we have $f \in k_v^*N_q(k_v(C))$, where $v$ is the place of $k$ below $w$. 
Since $k_v^* \subset N_q(k_v(C))$,
\[ \mu f \in k_v^*N_q(k_v(C)) = N_q(k_v(C)) \subset N_q(L_w(C)).\]
Therefore $q \otimes \lan 1, -\mu f \ran $ is hyperbolic over $L_w(C)$ for all places $w$ of $L$.
By \cite[Theorem 4]{aej}, the natural map
\[ I^3L(C) \lrarrow \prod _w I^3L_w(C) \]
is injective, where $w$ runs over all places of $L$.
Hence $q \otimes \lan 1, -\mu f \ran $ is hyperbolic over $L(C)$.
By \cite[Proposition 2.3]{cts}, we have
\[ \mu f \in N_q(L(C)) \cap k(C)^* = N_q(k(C)) .\]
This proves the required injectivity.

The image of the global-to-local map $\Phi$ lies in the direct sum $\bigoplus _v \chow_0(X\otimes _k k_v/C\otimes_k k_v)$.
Indeed, a quadratic form of rank $4$ defined over a number field $k$ is isotropic over $k_v$ for all but finitely many places $v$ of $k$.
Therefore for all but finitely many $v$, $\chow_0(X\otimes _k k_v /C\otimes _k k_v) =0 $.
\end{proof}

\begin{rem}\normalfont
Without the assumption that the generic fiber is defined over $k$, the natural map
\[ \Phi : \chow_0(X/C) \lrarrow \prod _{v \in \Omega} \chow_0(X\otimes _k k_v/C\otimes _k k_v) \]
is not injective \cite{suresh}.

\end{rem}
%%%%%%%%%%%%5propoの説明を入れる%%%%%%%%%%%%%%%%%%%55
Finally, we consider the restricted global-to-local map $\Phi _{\mathrm{real}}$.
Parimala and Suresh's result \cite[Theorem 5.4]{ps}, which is for quadratic forms of rank at least $5$, does not hold for forms of smaller rank.
We give the following example, which is an variation of \cite[Proposition 6.1]{ps}.
%When $\dim X = 3$, there exists a counterexample to \cite[Theorem 5.4]{ps}.

\begin{prop}\label{mt2}
Let $C$ be the elliptic curve over $\bbq$ defined by
\[ y^2 = -x(x+2)(x+3).\]
Assume that the generic fiber of a quadric fibration $\pi : X \to C$ is isomorphic to the quadric defined by the quadratic form $q= \lan 1, -2, 3, -6 \ran$.
Then the natural map
\[ \Phi_{\mathrm{real}} : \chow_0(X/C) \lrarrow \chow_0(X \otimes _\bbq \bbr/ C \otimes _\bbq \bbr) \]
is not injective.
\end{prop}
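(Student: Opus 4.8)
The plan is to reduce the non-injectivity of $\Phi_{\mathrm{real}}$ to the single statement $\chow_0(X/C)\neq 0$, and then to exhibit an explicit nonzero class. First I would observe that the target already vanishes: over $\bbr$ the form $q=\lan 1,-2,3,-6\ran$ has signature $0$, hence is hyperbolic, hence isotropic over $\bbr(C)$, so $N_q(\bbr(C))=\bbr(C)^*$ and the exact sequence of Section 2 gives $\chow_0(X\otimes_\bbq\bbr/C\otimes_\bbq\bbr)\hookrightarrow \bbr(C)^*/\bbr^* N_q(\bbr(C))=0$. Thus $\Phi_{\mathrm{real}}$ is the zero map into the zero group, and it fails to be injective exactly when $\chow_0(X/C)\neq 0$. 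Using $\delta$ and the exact sequence computing $\chow_0(X/C)$, it then suffices to produce $f\in\bbq(C)^*$ whose valuation $v_P(f)$ is even at every closed point $P$ at which $q$ is anisotropic over $k(P)$ (so that the class of $f$ lies in the kernel of the map to $\bigoplus_P\bbz/\deg_{X_P/k(P)}(\chow_0(X_P))$), while $f\notin\bbq^*N_q(\bbq(C))$.

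For the construction I would write $q=\lan 1,-2\ran\otimes\lan 1,3\ran$, so that $N_q(\bbq(C))=\operatorname{Nrd}(D^*)$ for the quaternion algebra $D=(2,-3)$ over $\bbq(C)$, where $D_0=(2,-3)_\bbq$ is ramified exactly at $\{2,3\}$ and split at $\infty$ (consistent with the hyperbolicity over $\bbr$). I would take $f=x+3$. On the elliptic curve one computes $\divisor(x+3)=2[(-3,0)]-2[O]$, so $v_P(x+3)$ is even at every $P$ and the class of $x+3$ lies in $\chow_0(X/C)$ for free. The remaining point is to show $x+3\notin\bbq^*N_q(\bbq(C))$, equivalently that the $3$-fold Pfister form $q\otimes\lan 1,-(x+3)\ran$, i.e. the symbol $(2,-3,x+3)\in H^3(\bbq(C),\bbz/2)$, is nonzero together with all its constant twists.

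Here one must be careful: since $D_0$ is split at $\infty$, Hasse--Schilling gives $N_q(\bbq)=\operatorname{Nrd}(D_0^*)=\bbq^*$, and likewise $N_q(\bbq_p)=\bbq_p^*$ for every $p$; moreover $(2,-3,f)$ is unramified on $C$, its residue at $P$ being $v_P(f)\,(2,-3)|_{k(P)}$, which vanishes because $v_P(f)$ is even at anisotropic $P$ and $(2,-3)$ is split at isotropic $P$. Consequently evaluation at rational points detects nothing, and I would instead localize at $p=3$, where $C$ has multiplicative reduction. The special fibre is the irreducible nodal cubic $\overline C:y^2=-x^2(x-1)$ over $\bbf_3$, whose normalization is $\bbp^1$ with parameter $t=y/x$ and $x=1-t^2$. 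I would use the residue $\partial$ for the discrete valuation $v$ defined by this special fibre, $H^3(\bbq_3(C),\bbz/2)\to H^2(\bbf_3(\overline C),\bbz/2)$. Since $v(-3)=1$ while $2$ and $x+3$ are units for $v$ (with $x+3$ reducing to the function $x$ on $\overline C$), the tame residue is $\partial(2,-3,x+3)=(\bar 2,\bar x)=(-1,x)$ in $\operatorname{Br}(\bbf_3(\overline C))[2]$, using $2\equiv -1\pmod 3$. Reading $x=1-t^2=-(t-1)(t+1)$ on $\bbp^1_t$, the class $(-1,x)$ has nonzero residues at $t=\pm 1$, since $-1$ is a nonsquare in $\bbf_3$; hence it is nonzero. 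Therefore $(2,-3,x+3)\neq 0$ over $\bbq_3(C)$, so $x+3\notin N_q(\bbq_3(C))=\bbq_3^*N_q(\bbq_3(C))$, and a fortiori $x+3\notin\bbq^*N_q(\bbq(C))$. The class of $x+3$ in $\chow_0(X/C)$ is thus nonzero while its image under $\Phi_{\mathrm{real}}$ is zero, which is what we want (and is consistent with Theorem \ref{mt}: the class is detected at the finite place $3$, not at $\infty$).

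The main obstacle is exactly this middle step. Because the reduced norm of $D_0$ is surjective over $\bbq$ and over every $\bbq_p$, the class $(2,-3,f)$ is forced to be unramified on the curve and cannot be seen by any residue on $C$ itself nor by specialization at rational points; the nontriviality is an essentially two-dimensional, arithmetic phenomenon. Capturing it requires passing to an integral model at a prime where $D_0$ ramifies and computing the \emph{second} residue along the special fibre, and the computation only succeeds because $3$ is simultaneously a ramification prime of $D_0$ (so $-3$ contributes an odd valuation) and a prime of bad reduction of $C$ (so the special fibre is rational and the resulting symbol $(-1,x)$ genuinely ramifies). Checking that this residue is well defined for the valuation $v$ and that its non-vanishing really certifies $x+3\notin N_q(\bbq_3(C))$ is the technical heart of the argument.
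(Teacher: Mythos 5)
Your proposal is correct, and its overall architecture is the same as the paper's: observe that the target vanishes because $q$ has signature $0$ over $\bbr$, so non-injectivity reduces to $\chow_0(X/C)\neq 0$; then exhibit a function with divisor of the form $2D$ (so its class lies in $\operatorname{Im}\delta$) that is not in $\bbq^*N_q(\bbq(C))$, detecting this at the prime $3$. The difference is in how the key non-norm statement is established. The paper takes $f=x$, notes that $q\cong\lan 1,1,3,3\ran$ over $\bbq_3$, and simply cites \cite[Proposition 6.1]{ps} for $x\notin\bbq_3^*N_q(\bbq_3(C))$. You take $f=x+3$ and prove the analogous statement from scratch: you identify $f\in N_q(\bbq_3(C))$ with the vanishing of the symbol $(2,-3,f)$, compute its second residue along the special fibre of the Weierstrass model over $\bbz_3$ (legitimate, since the reduction $y^2=-x^2(x-1)$ is integral, so the local ring at its generic point is a DVR with uniformizer $3$), and land on $(-1,x)=(-1,1-t^2)$ over $\bbf_3(t)$, which is nonzero because $-1$ is a nonsquare in $\bbf_3$. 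All the supporting facts you invoke check out: $(2,-3)_\bbq$ is ramified exactly at $\{2,3\}$, $\divisor(x+3)=2[(-3,0)]-2[O]$, and $\bbq_3^*\subset N_q(\bbq_3(C))$ so that $\bbq_3^*N_q(\bbq_3(C))=N_q(\bbq_3(C))$. What your route buys is self-containedness and an explanation of \emph{why} the example works (the obstruction is invisible to residues on $C$ itself and to rational points, and only appears at a prime where $D_0$ ramifies and $C$ has bad reduction simultaneously); what the paper's route buys is brevity, at the cost of deferring the essential computation to \cite{ps}.
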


\begin{proof}
Since $q$ is isotropic over $\bbr$, $N_q(\bbr(C)) = \bbr(C)^*$.
So we have $\chow_0(X \otimes _\bbq \bbr/ C \otimes _\bbq \bbr) =0$.

Since $\divisor_C(x) = 2D$, for some divisor $D$ on $C$, $x \in \bbq(C)^*/\bbq^* N_q(\bbq(C))$ is contained in $\operatorname{Im}\delta$. 
On the other hand, $q$ is isometric to $\lan 1, 1, 3, 3 \ran$ over $\bbq_3$.
Thus $x \notin \bbq_3^* N_q(\bbq_3(C))$ by \cite[Proposition 6.1]{ps}.
Therefore we have $\chow_0(X/C) \neq 0$.
\end{proof}

\begin{rem}\normalfont
Note that in the above case the map
\[ \Phi : \chow_0(X/C) \lrarrow \bigoplus _{v \in \Omega} \chow_0(X\otimes _k k_v/C\otimes _k k_v) \]
is injective by Theorem\ref{mt}.
The summands of the right hand side vanish except for $\chow_0(X \otimes _\bbq \bbq_2/C \otimes _\bbq \bbq_2)$ and $\chow_0( X \otimes _\bbq \bbq_3/C \otimes _\bbq \bbq_3)$, since the quadratic form $\lan 1, -2, 3, -6 \ran$ is isotropic over $\bbr$ and over $\bbq_p$ for all primes $p$ except 2 and 3.
\end{rem}

\begin{bibdiv}
\begin{biblist}

\bib{aej}{article}{
   author={Arason, J{\'o}n Kr.},
   author={Elman, Richard},
   author={Jacob, Bill},
   title={Fields of cohomological $2$-dimension three},
   journal={Math. Ann.},
   volume={274},
   date={1986},
   number={4},
   pages={649--657},
 
}

%\bib{ct}{article}{
  % author={Colliot-Th{\'e}l{\`e}ne, Jean-Louis},
   %title={Principe local-global pour les z\'ero-cycles sur les surfaces
   %r\'egl\'ees},
%    language={French},
%    note={With an appendix by E. Frossard and V. Suresh},
%    journal={J. Amer. Math. Soc.},
%    volume={13},
%    date={2000},
%    number={1},
%    pages={101--127},

% }

\bib{cts}{article}{
   author={Colliot-Th{\'e}l{\`e}ne, Jean-Louis},
   author={Skorobogatov, Alexei N.},
   title={Groupe de Chow des z\'ero-cycles sur les fibr\'es en quadriques},
   
   journal={$K$-Theory},
   volume={7},
   date={1993},
   number={5},
   pages={477--500},
 
}

\bib{fulton}{book}{
   author={Fulton, William},
   title={Intersection theory},
   series={Ergebnisse der Mathematik und ihrer Grenzgebiete. 3. Folge. A
   Series of Modern Surveys in Mathematics [Results in Mathematics and
   Related Areas. 3rd Series. A Series of Modern Surveys in Mathematics]},
   volume={2},
   edition={2},
   publisher={Springer-Verlag},
   place={Berlin},
   date={1998},
   pages={xiv+470},
   
}

\bib{kn2}{article}{
   author={Knebusch, Manfred},
   title={Generic splitting of quadratic forms. I},
   journal={Proc. London Math. Soc. (3)},
   volume={33},
   date={1976},
   number={1},
   pages={65--93},
  
}

\bib{lam}{book}{
   author={Lam, T. Y.},
   title={Introduction to quadratic forms over fields},
   series={Graduate Studies in Mathematics},
   volume={67},
   publisher={American Mathematical Society},
   place={Providence, RI},
   date={2005},
   pages={xxii+550},
  
}

\bib{ps}{article}{
   author={Parimala, R.},
   author={Suresh, V.},
   title={Zero-cycles on quadric fibrations: finiteness theorems and the
   cycle map},
   journal={Invent. Math.},
   volume={122},
   date={1995},
   number={1},
   pages={83--117},
   
}

\bib{sal}{article}{
   author={Salberger, P.},
   title={Zero-cycles on rational surfaces over number fields},
   journal={Invent. Math.},
   volume={91},
   date={1988},
   number={3},
   pages={505--524},
}

\bib{suresh}{article}{
   author={Suresh, V.},
   title={Zero cycles on conic fibrations and a conjecture of Bloch},
   journal={$K$-Theory},
   volume={10},
   date={1996},
   number={6},
   pages={597--610},
}

\bib{sw}{article}{
   author={Swan, Richard G.},
   title={Zero cycles on quadric hypersurfaces},
   journal={Proc. Amer. Math. Soc.},
   volume={107},
   date={1989},
   number={1},
   pages={43--46},
   
}

\end{biblist}
\end{bibdiv}

\end{document}